\def\rdots{\mathinner{\mkern1mu\raise\p@
\vbox{\kern7\p@\hbox{.}}\mkern2mu
\raise4\p@\hbox{.}\mkern2mu\raise7\p@\hbox{.}\mkern1mu}}
\title{DERIVATION OF BELL POLYNOMIALS OF THE SECOND KIND}
\author{Vladimir~Kruchinin}
\begin{document}

\maketitle

\begin{abstract}
New methods for derivation of Bell polynomials of the second kind are presented. The methods are based on an ordinary generating function and its composita. The relation between a composita and a Bell polynomial is demonstrated. Main theorems are written and examples of Bell polynomials for trigonometric functions, polynomials, radicals, and Bernoulli functions are given.
\end{abstract}

\theoremstyle{plain}
\newtheorem{theorem}{Theorem}
\newtheorem{corollary}[theorem]{Corollary}
\newtheorem{lemma}[theorem]{Lemma}
\newtheorem{proposition}[theorem]{Proposition}

\theoremstyle{definition}
\newtheorem{definition}[theorem]{Definition}
\newtheorem{example}[theorem]{Example}
\newtheorem{conjecture}[theorem]{Conjecture}

\theoremstyle{remark}
\newtheorem{remark}[theorem]{Remark}

\newtheorem{Theorem}{Theorem}[section]
\newtheorem{Proposition}[Theorem]{Proposition}
\newtheorem{Corollary}[Theorem]{Corollary}
\theoremstyle{definition}
\newtheorem{Example}[Theorem]{Example}
\newtheorem{Remark}[Theorem]{Remark}
\newtheorem{Problem}[Theorem]{Problem}

\section{Introduction}

	Bell polynomials are an important tool in solving various mathematical problems, among which is finding of higher derivatives of composite functions \cite{WJohnson, LRiordan, Comtet}. However, a general expression for Bell polynomials is rather difficult to derive. One of the main tools in computations of Bell polynomials is exponential generating functions \cite{LRiordan, Comtet}. In this paper, it is proposed to use ordinary generating functions and their compositae \cite{KruSuperposition} to derive expressions for Bell polynomials. Let us introduce the following notation. Let there be given a function $y(x)$ and an ordinary generating function $Y(x,z)=\sum_{n>0} \frac{y^{(n)}(x)}{n!}z^n$. By definition, the Bell polynomial of the second kind is written as
$$
B_{n,k}(y^{(1)},y^{(2)},\ldots y^{(n-k+1)})=\frac{1}{k!}\sum_{\pi_k\in C_n}{n \choose \lambda_1,\lambda_2,\cdots\lambda_k} y^{(\lambda_1)}y^{(\lambda_2)}\cdots y^{(\lambda_k)}
$$
or as
$$
B_{n,k}=\frac{n!}{k!}\sum_{\pi_k\in C_n} \frac{y^{(\lambda_1)}(x)}{\lambda_1!}\frac{y^{(\lambda_2)}(x)}{\lambda_2!}\cdots\frac{y^{(\lambda_k)}(x)}{\lambda_k!}
$$
where $y^{(i)}$ is the $i$-th derivative of the function $y(x)$,
$C_n$ is the set of compositions of $n$, and\\
$\pi_k$ is the composition of $n$ with $k$ parts exactly $\{\lambda_1+\lambda_2+\ldots \lambda_k=n\}$. 

The polynomial $B_{n,k}$ has the form of a triangle in which the left part contains all derivatives of the function $y(x)$ and the right part contains $[y'(x)]^n$. 
$$
\begin{array}{ccccccccccc}
&&&&& y^{(1)}\\
&&&& y^{(2)} && [y^{(1)}]^2\\
&&& y^{(3)} && B_{3,2} && [y^{(1)}]^3\\
&& y^{(4)} && B_{4,2} && B_{4,3} && [y^{(1)}]^4\\
& \rdots && \vdots && \vdots && \vdots && \ddots\\
y^{(n)} && B_{n,2} && \cdots && \cdots && B_{n,n-1} && [y^{(1)}]^n\\
\end{array}
$$ 
The generating function is $Y(x,z)=\sum_{n>0} \frac{y^{(n)}(x)}{n!}z^n=y(x+z)-y(x)$ \cite{WJohnson, LRiordan, Comtet}. Hence, we can introduce the composita of the generating function $Y(x,z)$ as \cite{KruSuperposition, KruCompositae}
$$
Y^{\Delta}(n,k,x)=\sum_{\pi_k\in C_n} \frac{y^{(\lambda_1)}(x)}{\lambda_1!}\frac{y^{(\lambda_2)}(x)}{\lambda_2!}\cdots\frac{y^{(\lambda_k)}(x)}{\lambda_k!},
$$
and the generating function for $Y^{\Delta}(n,k,x)$ will have the expression:
$$
[Y(x,z)]^k=\left(y(x+z)-y(x)\right)^k=\sum_{n\geq k} Y^{\Delta}(n,k,x)z^n
$$
In view of the foregoing, we can write the relation for the Bell polynomial and composita of the ordinary generating function $Y(x,z)$:
\begin{equation}\label{BellFormula}
B_{n,k}=\frac{n!}{k!}Y^{\Delta}(n,k,x).
\end{equation}
Because there is a one-to-one relation between the composita and the Riordan array \cite{KruCompositae}, the exponential Riordan array $(1,y(x))$ and the Bell polynomial $B_{n,k}(y_1,y_2,\ldots, y_{n-k+1})$, where $y(x)=\sum_{n>0} y_n\frac{x^n}{n!}$, are equivalent.

\section{Expressions for Bell polynomials based on the composita of a generating function $Y(\alpha,z)$}

Let us consider the problem of finding the Bell polynomial $B_{n,k}$ as the problem of finding coefficients of an ordinary generating function $Y(\alpha,z)^k$. This is possible if we represent the generating function $Y(x,z)$ as $F(g(x),h(z))$; then we can use the apparatus of compositae introduced in \cite{KruSuperposition, KruCompositae}. Let us consider the following examples:

\Example
Let there be given a function $y(x)$ with two derivatives $y'(x)$ and $y''(x)$. Let us find an expression for the composita of this function. By definition,
$$
Y^{\Delta}(n,k,x)=\sum_{\pi_k\in C_n}\frac{y^{(\lambda_1)}}{\lambda_1!}\frac{y^{(\lambda_2)}}{\lambda_2!}\cdots \frac{y^{(\lambda_k)}}{\lambda_k!}.
$$
Then, the generating function has the expression $y(x+z)-y(x)=y'(x)z+\frac{y''(x)}{2}z^2$. 
Hence, according to the formula of the composita for the polynomial $ax+bx^2$ \cite{KruSuperposition}, we obtain
\begin{equation} \label{fk_f1f2}
Y^{\Delta}(n,k,x)={k \choose n-k}[f'(x)]^{2k-n}\left(\frac{f''(x)}{2}\right)^{n-k}.
\end{equation}
Thus, the Bell polynomial for the function with derivatives $y'(x)$ and $y''(x)$ is equal to
\begin{equation} 
B_{n,k}=\frac{n!}{k!}{k \choose n-k}[f'(x)]^{2k-n}\left(\frac{f''(x)}{2}\right)^{n-k}.
\end{equation}

\Example\label{Example1} Let there be given a function $y(x)=x^{m}$, where $m>0$. The generating function is $Y(x,z)=(x+z)^m-x^m$. Let us find a composita of $Y(x,z)$; for this purpose, we are to find the coefficients:
$$
x^{km}\left[\left(1+\frac{z}{x}\right)^m-1\right]^k=x^{km}\sum_{j=0}^k{k \choose j}\left(1+\frac{z}{x}\right)^{jm}(-1)^{k-j};
$$
From whence, knowing that the coefficients for $\left(1+\frac{z}{x}\right)^{jm}$ are equal to ${jm \choose n}\frac{1}{x^n}$, we obtain the desired composita
$$
Y^{\Delta}(n,k,x)=x^{km}\sum_{j=0}^k{k \choose j}{jm \choose n}x^{-n}(-1)^{k-j}.
$$
Then the Bell polynomial is
$$
B_{n,k}=\frac{n!}{k!}x^{km-n}\sum_{j=0}^k{k \choose j}{jm \choose n}(-1)^{k-j}.
$$
\Example Let there be given a function $y(x)=x^{-m}$, where $m>0$. The generating function is $Y(x,z)=\frac{1}{(x+z)^m}-\frac{1}{x^m}$. Let us find a composita of $Y(x,z)$; for this purpose, we are to find the coefficients
$$
Y(x,z)^k=\frac{1}{x^{mk}}\left[\frac{1}{\left(1+\frac{z}{x}\right)^m}-1\right]^k;
$$
from whence it follows that the composita is equal to
$$
\left(\sum_{j=1}^{k}{{{k}\choose{j}}\,\left(-1\right)^{n+k-j}\,{{n+
 j\,m-1}\choose{j\,m-1}}}\right)\,x^{-n-k\,m}.
$$
Then the Bell polynomial is equal to
$$
B_{n,k}=\frac{n!}{k!}\left(\sum_{j=1}^{k}{{{k}\choose{j}}\,\left(-1\right)^{n+k-j}\,{{n+
 j\,m-1}\choose{j\,m-1}}}\right)\,x^{-n-k\,m}.
$$

\Example Let us consider the example of use of the composita for the generating function $f(z)=az+bz^2+cz^3$: 
$$
F^{\Delta}(n,k)=\sum\limits_{j=0}^k{k\choose j}{j \choose n-k-j}a^{k-j}b^{2j+k-n}c^{n-k-j}.
$$ 
Substitution of $a=\frac{f'(x)}{1!}$, $b=\frac{f''(x)}{2!}$, $c=\frac{f'''(x)}{3!}$ gives the Bell polynomial:
$$
B_{n,k}=\frac{n!}{k!}\sum\limits_{j=0}^k{k\choose j}{j \choose n-k-j}(f'(x))^{k-j}\left(\frac{f''(x)}{2}\right)^{2j+k-n}\left(\frac{f'''(x)}{6}\right)^{n-k-j}.
$$ 
Let us consider the example $f(x)=x^3+2x$, $f'(x)=3x^2+2$, $f''(x)=6x$, $f'''(x)=6$; then, $a=3x^2+2$, $b=3x$, $c=1$. Then the Bell polynomial is
$$\frac{n!}{k!}\sum_{j=0}^{k}{{{j}\choose{n-k-j}}\,{{k}\choose{j}}\,3^{-n+k+2\,j}
 \,x^{-n+k+2\,j}\,\left(3\,x^2+2\right)^{k-j}}$$
Presented below are the first terms of this polynomial
$$3x^2+2$$
$$6x,~(3x^2+2)^2$$
$$6,~18x(3x^2+2),~(3x^2+2)^3$$
$$0,~180x^2+48,~36x(3x^2+2)^2,~(3x^2+2)^4$$

The same reasoning allows us to obtain Bell polynomials for functions whose generating functions $y(x+z)-y(x)$ are expressed in polynomials. Expressions for the compositae of polynomials and methods of their derivation are described in \cite{KruSuperposition}.

\Example
Let us find a Bell polynomial for the function $\sin x$. For this purpose, we find the composita of the function $\sin(x+z)-\sin x$. Then
$$
S(x,z)=\cos x\sin z+\sin x(\cos z-1),
$$
where $\sin z$ and $\cos z$ are generating functions, and $\sin x$ and $\cos x$ are coefficients. Hence the composita of the function $\cos x\sin z$ \cite{KruSuperposition} is
$$
F^{\Delta}(n,k,x)=(\cos x)^k\frac{(1+(-1)^{n-k})}{2^{k}n!}\sum\limits_{m=0}^{\frac{k}{2}} {k \choose m} (2m-k)^n(-1)^{\frac{n+k}{2}-m},
$$
Now, let us write the coefficients $T_{n,k}$ for $\cos^k(z)=\sum_{n\geq 0} T_{n,k}z^n$.
$$
T_{n,k}=\left\{
\begin{array}{ll}
1,& n=0\\
0,& n- \hbox{odd},\\
\frac{1}{2^{k-1}}\sum_{i=0}^{\frac{k-1}{2}}{k\choose i}\frac{(k-2i)^{n}}{(n)!}(-1)^\frac{n}{2},& n- \hbox{even}.\\
\end{array}
\right.
$$
Then we obtain the composita of the generating function $\sin x(\cos(z)-1)$ 
$$
R^{\Delta}(n,k,x)=(\sin x)^k\frac{(-1)^n+1}{n!}\sum_{j=1}^{k}\frac{(-1)^{{{n}\over{2}}+k-j}}{2^j}{{k}\choose{j}}\sum_{i=0}^{\left \lfloor {{j-1}\over{2}} \right \rfloor}{\left(j-2i\right)^{n}{j\choose i}}$$

Next, from the theorem of the composita of the sum of generating functions \cite{KruSuperposition}, we obtain the desired composita
{
$$
S^{\Delta}(n,k,x)=F^{\Delta}(n,k,x)+R^{\Delta}(n,k,x)+\sum_{j=1}^{k-1}{k \choose j}
\sum_{i=j}^{n-k+j}F^{\Delta}(i,j,x)R^{\Delta}(n-i,k-j,x).
$$
}
Presented below are the first terms of the Bell polynomial $B_{n,k}=\frac{n!}{k!}S^{\Delta}(n,k,x)$ for the function $\sin x$:
$$\cos x$$
$$-\sin x,~\cos ^2x$$
$$-\cos x,~-3\,\cos x\,\sin x,~\cos ^3x$$
$$\sin x,~3\,\sin ^2x-4\,\cos ^2x,~-6\,\cos ^2x\,\sin x,~\cos ^4x$$
$$\cos x,~15\,\cos x\,\sin x,~15\,\cos x\,\sin ^2x-10\,\cos ^3x,~-10\,\cos ^3x\,\sin x,~\cos ^5x$$

Now the derivative $f_1^{(4)}(x)$ for the function $f_1(x)=e^{\sin x}$ is expressed as
$$
f_1^{(4)}(x)=e^{\sin x}\left(\sin x+3\sin^2x-4\cos^2x-6\cos^2x\sin x+\cos^4x\right).
$$
The derivative $f_2^{(5)}(x)$ for $f_2(x)=\sin^3 x$ is expressed as
$$
f_2^{(5)}(x)=3\sin^2 x(\cos x)+6\sin x(15\cos x\sin x)+6(15\cos x\sin^2x-10\cos^3x)=
$$
$$
=183\sin^2 x\cos x-60\cos^3x.
$$

In the same way, we can find a Bell polynomial for the function $\cos x$; for this purpose, we are to find the composita of the generating function:
$$
C(x,z)=\cos x(\cos z-1)-\sin x \sin z.
$$

\Example Let us consider the function $y(x)=\sqrt[3]{x}$. The generating function is $Y(x,z)=\sqrt[3]{x+z}-\sqrt[3]{x}$. Hence
$$
Y(x,z)^m=(-1)^m(\sqrt[3]{x})^m\left[1-\sqrt[3]{\left(1+\frac{z}{x}\right)}\right]^m.
$$
Given the composita of the generating function $1-\sqrt[3]{1-z}$ \cite{KruCompositae}, we obtain the desired composita 
$$
Y^{\Delta}(n,m,x)=\left\{\begin{array}{ll}	
(\sqrt[3]{x})^m(\frac{1}{3})^n, & n=m,\\
(\sqrt[3]{x})^m{\frac{m}{n}\sum\limits_{k=1}^{n-m}{{{k}\choose{n-m-k}}3^{-2n+m+k}(-1)^{k}\,{{n+k-1}\choose{n-1}}}}x^{-n}, & n>m.
\end{array}
\right. 
$$

\section{Method based on operations on compositae $Y^{\Delta}(n,k,x)$}

Let us consider peculiarities of the generating function $Y(x,z)=\sum_{n>0} \frac{y^{(n)}(x)}{n!}z^n=y(x+z)-y(x)$. For this purpose, we prove the following theorem.
\begin{Theorem}\label{FaDeeBruno}
Let there be given a composition $f(x)=g(y(x))$ and functions $g(x)$, $y(x)$ with an infinite number of derivatives in the general case. Then the generating functions $F(x,z)=\sum_{n\geqslant 0}\frac{f^{(n)}(x)}{n!}z^n$, $Y(x,z)=\sum_{n\geqslant 1}\frac{y^{(n)}(x)}{n!}z^n$ и $G(x,z)=\sum_{n\geqslant 0}\frac{g^{(n)}(x)}{n!}z^n$
form the composition
$$
F(x,z)=G(y,Y(x,z)).
$$
\end{Theorem}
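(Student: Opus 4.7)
The plan is to unpack all three generating functions as Taylor shifts. By definition and Taylor's theorem (either analytically, on the domain of convergence, or as a statement about formal power series in $z$ with coefficients that are the derivatives of the functions at $x$), one has
$$
F(x,z)=\sum_{n\geq 0}\frac{f^{(n)}(x)}{n!}z^n=f(x+z),\qquad G(u,w)=\sum_{n\geq 0}\frac{g^{(n)}(u)}{n!}w^n=g(u+w),
$$
and, as already observed in the excerpt,
$$
Y(x,z)=\sum_{n\geq 1}\frac{y^{(n)}(x)}{n!}z^n=y(x+z)-y(x).
$$

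The key computation is then a one-line substitution. Setting $u=y(x)$ and $w=Y(x,z)$ in the identity $G(u,w)=g(u+w)$ gives
$$
G\bigl(y(x),Y(x,z)\bigr)=g\bigl(y(x)+Y(x,z)\bigr)=g\bigl(y(x)+y(x+z)-y(x)\bigr)=g\bigl(y(x+z)\bigr).
$$
Since $f=g\circ y$ by hypothesis, the last expression equals $f(x+z)=F(x,z)$, which is precisely the claim.

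The only delicate point is a convergence/formal-series subtlety: the substitution of $w=Y(x,z)$ into $G(u,w)$ requires $Y(x,z)$ to have zero constant term in $z$, which holds because the sum defining $Y$ starts at $n=1$. This makes the composition $G(y(x),Y(x,z))$ a well-defined formal power series in $z$, so the argument goes through without analytic hypotheses; if one prefers the analytic setting, the same substitution is legitimate in a neighborhood of $z=0$ where $|Y(x,z)|$ is small enough to lie in the disk of convergence of $G(y(x),\cdot)$. Beyond this bookkeeping, there is no real obstacle: the theorem is essentially the statement that Taylor expansion commutes with composition, reformulated through the substitution $Y(x,z)=y(x+z)-y(x)$.
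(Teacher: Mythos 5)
Your proof is correct, but it runs in the opposite direction from the paper's. The paper proves this theorem by quoting the Fa\`a di Bruno formula for $f^{(n)}(x)$, dividing by $n!$, and recognizing the resulting identity
$\frac{f^{(n)}(x)}{n!}=\sum_{k=1}^n\frac{g^{(k)}(y)}{k!}\,Y^{\Delta}(n,k,x)$
as precisely the coefficient rule for composing ordinary generating functions; the theorem is thus presented as a generating-function restatement of Fa\`a di Bruno, and this coefficient-level identity (the paper's equation for $\frac{f^{(n)}(x)}{n!}$) is what gets reused later, e.g.\ in the proof of the composition theorem for compositae. You instead identify each series with a Taylor shift, $F(x,z)=f(x+z)$, $G(u,w)=g(u+w)$, $Y(x,z)=y(x+z)-y(x)$, and obtain the claim by the one-line substitution $g(y(x)+y(x+z)-y(x))=g(y(x+z))=f(x+z)$. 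This is more elementary and arguably more illuminating: it does not presuppose Fa\`a di Bruno, and combined with the standard coefficient formula for composition of power series it actually \emph{derives} Fa\`a di Bruno as a corollary, whereas the paper assumes it. The price is that you must justify the substitution of a series with vanishing constant term (formally) or control $|Y(x,z)|$ (analytically), which you do address; the paper's coefficient-wise argument sidesteps that entirely. Both proofs are valid; yours is logically independent of the classical formula, the paper's is the one that wires the theorem into the composita machinery used in the rest of the text.
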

\begin{proof} Let us write the known Faa di Bruno formula \cite{WJohnson, LRiordan, Comtet}:
$$
f^{(n)}(x)=\sum_{k=1}^ng^{(k)}(y)\frac{n!}{k!}\sum_{\pi_k\in C_n} \frac{y^{(\lambda_1)}(x)}{\lambda_1!}\frac{y^{(\lambda_2)}(x)}{\lambda_2!}\cdots\frac{y^{(\lambda_k)}(x)}{\lambda_k!}.
$$
Hence
\begin{equation}\label{CompositionOGF}
\frac{f^{(n)}(x)}{n!}=\sum_{k=1}^n\frac{g^{(k)}(y)}{k!}\sum_{\pi_k\in C_n} \frac{y^{(\lambda_1)}(x)}{\lambda_1!}\frac{y^{(\lambda_2)}(x)}{\lambda_2!}\cdots\frac{y^{(\lambda_k)}(x)}{\lambda_k!}
\end{equation} 
Thus, we obtain the formula for the composition of ordinary generating functions \cite{KruSuperposition}. It is evident that the nonzero term of $F(x,z)$ is equal to $g(y(x))$. 
\end{proof} The peculiarity here is that in the operation of the composition of generating functions, the argument $x$ in $G(x,z)$ is replaced by $y(x)$.
Let us turn to the problem of finding compositae of the generating functions $\left(y(x+z)-y(x)\right)$ using the operations of summation, product, and composition.

\begin{Theorem} \label{Theorem_sum} Let there be generating functions $F(x,z)=f(x+z)-f(x)=\sum\limits_{n>0} \frac{f^{(n)}(x)}{n!}z^n$, $G(x,z)=g(x+z)-g(x)=\sum\limits_{n>0} g^{(n)}(x)z^n$  and their compositae $F^{\Delta}(n,k,x)$ , $G^{\Delta}(n,k,x)$. Then the generating function $A(x,z)=F(x,z)+G(x,z)$ has the composita
$$
A^{\Delta}(n,k,x)=F^{\Delta}(n,k,x)+\sum\limits_{j=1}^{k-1}{k\choose j}\sum\limits_{i=j}^{n-k+j}F^{\Delta}(i,j,x)G^{\Delta}(n-i,k-j,x)+G^{\Delta}(n,k,x).
$$
\end{Theorem}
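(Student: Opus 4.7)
The plan is to obtain $A^\Delta(n,k,x)$ directly from the defining identity of the composita, namely
$$
[A(x,z)]^k = \sum_{n\geq k} A^\Delta(n,k,x)\,z^n,
$$
by expanding $A(x,z)^k = (F(x,z)+G(x,z))^k$ with the binomial theorem. Since both $F$ and $G$ have no constant term in $z$ (they are of the form $h(x+z)-h(x)$), the powers $F^j$ and $G^{k-j}$ are genuine generating functions whose composita-expansions
$$
[F(x,z)]^j = \sum_{i\geq j} F^\Delta(i,j,x)\,z^i,\qquad [G(x,z)]^{k-j} = \sum_{m\geq k-j} G^\Delta(m,k-j,x)\,z^m
$$
are available by hypothesis.

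The main step will then be to compute, for each $j$ with $0\leq j\leq k$, the coefficient of $z^n$ in $\binom{k}{j}[F(x,z)]^j[G(x,z)]^{k-j}$ via the Cauchy product. For $1\leq j\leq k-1$ this coefficient is
$$
\binom{k}{j}\sum_{i=j}^{n-(k-j)} F^\Delta(i,j,x)\,G^\Delta(n-i,k-j,x),
$$
where the bounds $i\geq j$ and $n-i\geq k-j$ come precisely from the fact that $F^\Delta(i,j,x)=0$ for $i<j$ and $G^\Delta(m,k-j,x)=0$ for $m<k-j$.

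Finally I would isolate the two boundary terms $j=0$ and $j=k$ from the binomial sum: when $j=k$ the factor $[G(x,z)]^0=1$ contributes exactly $F^\Delta(n,k,x)$, and symmetrically when $j=0$ it contributes $G^\Delta(n,k,x)$. Adding the three pieces produces the stated formula for $A^\Delta(n,k,x)$. No step is genuinely hard; the only delicate point is the bookkeeping of summation ranges, making sure that the $j=0$ and $j=k$ contributions are separated so that the inner index $i$ in the middle sum can be written cleanly as $j\leq i\leq n-k+j$ without convention-dependent terms $F^\Delta(0,0,x)$ or $G^\Delta(0,0,x)$ appearing inside it.
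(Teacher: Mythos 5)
Your proposal is correct: expanding $(F+G)^k$ by the binomial theorem, taking the coefficient of $z^n$ in each term $\binom{k}{j}F^jG^{k-j}$ via the Cauchy product, and splitting off the boundary cases $j=0$ and $j=k$ yields exactly the stated formula, with the inner range $j\leq i\leq n-k+j$ forced by the vanishing of $F^{\Delta}(i,j,x)$ for $i<j$ and of $G^{\Delta}(m,k-j,x)$ for $m<k-j$. Note that the paper itself gives no argument here (the proof is marked ``without proof,'' deferring to the cited reference on compositae), so there is nothing to compare against; your argument is the standard one and fills that omission completely.
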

\begin{proof} without proof \end{proof}
\Example
Let there be $f(x)=x^2$, $F(x,z)=2xz+z^2$, a composita $F^{\Delta}(n,k,x)={k \choose n-k}(2x)^{2k-n}$ and
      $g(x)=\ln(x)$, $G(x,z)=\ln(x+z)-\ln(x)=\ln(1+\frac{z}{x})$, and a composita $G^{\Delta}(n,k,x)=\frac{k!}{n!}\left[{n\atop k}\right]x^{-n}$. Then for the function $a(x)=x^2+\ln(x)$, the Bell polynomial is
$$
B_{n,k}=\frac{n!}{k!}\sum_{j=0}^{k} {k \choose j}\sum_{i=j}^{n-k+j}\frac{j!}{i!}\left[{i\atop j}\right]{k-j \choose n-i-k+j}2^{2(k-j)-n+i}x^{2(k-j)-n}.
$$
Now let us turn to finding of the composita $Y^{\Delta}(n,k,x)$ of the function $y(x)=f(x)g(x)$ expressed as the product of the functions $f(x)$ and $g(x)$. Let us prove the following theorem.
\begin{Theorem} \label{Theorem_prod} Let there be a function $a(x)=f(x)g(x)$; then the composita of the function $Y(x,z)=f(x+z)g(x+z)-f(x)g(x)$ is equal to
$$
Y^{\Delta}(n,k,x)=\sum_{j=0}^k {k \choose j}\left( \sum_{i=0}^n F(i,j,x)G(n-i,j,x)\right)[f(x)g(x)]^{k-j}(-1)^{k-j}. 
$$
where $F(n,k)$ are coefficients of the generating function $[f(x+z)]^k$, and $G(n,k)$ -- $[g(x+z)]^k$.
\end{Theorem}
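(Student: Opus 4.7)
The plan is to expand $Y(x,z)^k$ directly with the binomial theorem, using $f(x)g(x)$ as a constant (in $z$) and $f(x+z)g(x+z)$ as the non-constant term, and then collapse the result with a Cauchy product.

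First I would write
\[
Y(x,z)^k = \bigl(f(x+z)g(x+z) - f(x)g(x)\bigr)^k = \sum_{j=0}^{k}\binom{k}{j}(-1)^{k-j}\bigl[f(x)g(x)\bigr]^{k-j}\bigl[f(x+z)g(x+z)\bigr]^{j},
\]
which is valid because $f(x)g(x)$ does not depend on $z$ and therefore commutes with every power series in $z$. Next, I would factor $\bigl[f(x+z)g(x+z)\bigr]^{j} = [f(x+z)]^{j}\,[g(x+z)]^{j}$, and plug in the hypothesized coefficient expansions $[f(x+z)]^{j} = \sum_{i\geq 0} F(i,j,x)\,z^{i}$ and $[g(x+z)]^{j} = \sum_{m\geq 0} G(m,j,x)\,z^{m}$.

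A standard Cauchy product then gives
\[
[f(x+z)]^{j}\,[g(x+z)]^{j} = \sum_{n\geq 0}\Bigl(\sum_{i=0}^{n}F(i,j,x)\,G(n-i,j,x)\Bigr)z^{n}.
\]
Substituting this into the binomial expansion and extracting the coefficient of $z^{n}$ yields
\[
[z^n]\,Y(x,z)^k = \sum_{j=0}^{k}\binom{k}{j}\Bigl(\sum_{i=0}^{n}F(i,j,x)\,G(n-i,j,x)\Bigr)\bigl[f(x)g(x)\bigr]^{k-j}(-1)^{k-j},
\]
which by the definition $[Y(x,z)]^{k} = \sum_{n\geq k}Y^{\Delta}(n,k,x)\,z^{n}$ is exactly the claimed formula.

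The argument is essentially mechanical, so I do not expect a genuine obstacle. The only subtlety worth flagging is notational: here $F(n,k,x)$ and $G(n,k,x)$ denote the coefficients of the \emph{raw} generating functions $[f(x+z)]^k$ and $[g(x+z)]^k$, not the compositae of the shifted differences $f(x+z)-f(x)$ and $g(x+z)-g(x)$ used elsewhere in the paper, so I would state this convention explicitly at the start of the proof to avoid confusion. One might also verify that for $n<k$ the right-hand side vanishes, which follows from $[f(x+z)]^j = f(x)^j + O(z)$ and $[g(x+z)]^j = g(x)^j + O(z)$ combined with the binomial identity $\sum_{j=0}^k\binom{k}{j}(-1)^{k-j}f(x)^jg(x)^j[f(x)g(x)]^{k-j} = 0$.
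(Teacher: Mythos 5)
Your proposal is correct and follows essentially the same route as the paper: a binomial expansion of $[f(x+z)g(x+z)-f(x)g(x)]^k$ treating $f(x)g(x)$ as the $z$-constant term, followed by a Cauchy product to extract the coefficient of $z^n$ from $[f(x+z)]^j[g(x+z)]^j$. The paper's own proof merely sketches this (``removing the brackets and substituting the expression for the coefficients''), so your version simply supplies the details it omits.
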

\begin{proof} Here we have the second peculiarity: it is necessary to take into account the rule of finding a derivative of the product. According to the Leibniz rule, we can write
$$
\frac{y^{(n)}}{n!}=\sum_{i=0}^n \frac{f^{(i)}}{i!}\frac{g^{(n-i)}}{(n-i)!}.
$$
Hence 
$$
y(x+z)=f(x+z)g(x+z)
$$
Now let us find coefficients for the expression $[f(x+z)g(x+z)-f(x)g(x)]^k$. By removing the brackets and substituting the expression for the coefficients of the generating functions $f(x+z)$ and $g(x+z)$, we obtain the desired formula.
\end{proof}
Given the composita of the generating function $f(x+z)-f(x)$ -- $F^{\Delta}(n,k,x)$, the coefficients of the generating function $[f(x+z)]$ are calculated by the formula:
$$
F(n,k,x)=\sum_{j=0}^k {k \choose j} F^{\Delta}(n,j,x)f(x)^{k-j}.
$$

\Example Let there be a function $x^{ax}$ (see the example in \cite{Comtet} ). Let us find an expression for the $n$-th derivative of this function. Let us write it in the form $\exp(ax\ln(x))$. For this purpose, we find a composita of the function $(x+z)\ln(x+z)-x\ln(x)$ and expressions for coefficients of the generating functions $(x+z)^k$ and $\ln(x+z)^k$. For the first function, $F(n,k)={k \choose n}x^{k-n}$; for the second function, $G(n,k)=\sum_{j=0}^k{k \choose j} \frac{j!}{n!}\left[{n\atop j}\right]x^{-n}\ln(x)^{k-j}$. Then the composita of the function $(x+z)\ln(x+z)-x\ln(x)$ is

$$
A^{\Delta}(n,k,x)=x^{k-n}\,\sum_{j=0}^{k}{\left(-1\right)^{k-j}\,{{k}\choose{j}}\,
 \left(\sum_{i=0}^{n}{{{{{j}\choose{i}}\,\frac{1}{(n-i)!}\sum_{m=0}^{j}{m!\,{{j
 }\choose{m}}\,{\left[n-i \atop m\right]}}}}}\,\left(\ln x\right)^{k-m}\right)}.
$$
From this it follows that the composita of the function $ax\ln x$ is equal to $a^kA^{\Delta}(n,k,x)$. Presented below are the first terms of this composita.
$$a(\ln x+1)$$
$${{a}\over{2\,x}},~~a^2(\ln x+1)^2$$
$$-{{a}\over{6 x^2}},~~{{a^2\,\ln x+a^2}\over{x}},~~a^3(\ln x+1)^3$$
$${{a}\over{12\,x^3}},~~ -{{4\,a^2\,\ln x+a^2}\over{12\,x^2}},~~ {{3\,a^3\,\ln ^2x+6\,a^3\,\ln x+3\,a^3}\over{2\,x}},~~a^4(\ln x+1)^4$$
 
Hence the expression for the $n$-th derivative of the generating function $x^{ax}$ has the form:
$$
[x^{ax}]^{(n)}=x^{ax}\sum_{k=1}^n \frac{n!}{k!}a^k x^{k-n} \sum_{j=0}^{k}{\left(-1\right)^{k-j}\,{{k}\choose{j}}\left(\sum_{i=0}^{n}{{{{{j}\choose{i}}\sum_{m=0}^{j}{\frac{m!}{(n-i)!}{{j
 }\choose{m}}{\left[n-i \atop m\right]}}}}}\left(\ln x\right)^{k-m}\right)}.
$$

Now let us consider the operation of product of compositae. 
For this purpose, we prove the following theorem.
\begin{Theorem} \label{Theorem_composition} Let there be functions $f(x)$, $g(x)$ and compositae of the generating functions $F^{\Delta}(n,k,x)$ for $f(x+z)-f(x)$ and $G^{\Delta}(n,k,x)$ for $g(x+z)-g(x)$. Then for the composition of the functions $y(x)=g(f(x))$, the composita of the generating function $Y(x,z)=g(f(x+z))-g(f(x))$ is
$$
Y^{\Delta}(n,m,x)=\sum_{k=m}^n F^{\Delta}(n,k,x)G^{\Delta}(k,m,f(x)). 
$$
\end{Theorem}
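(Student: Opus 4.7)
The plan is to reduce the statement to the composition rule for ordinary generating functions that is already implicit in Theorem \ref{FaDeeBruno}, by recognizing the outer generating function $Y(x,z)=g(f(x+z))-g(f(x))$ as the composition of $G$ and $F$ with the peculiar parameter shift noted there. First I would introduce $u=F(x,z)=f(x+z)-f(x)$, so that $f(x+z)=f(x)+u$ and
$$
Y(x,z)=g(f(x)+u)-g(f(x))=G(f(x),F(x,z)),
$$
where $G(w,z)=g(w+z)-g(w)=\sum_{n\geq 1}\frac{g^{(n)}(w)}{n!}z^n$. This is exactly the situation addressed in Theorem \ref{FaDeeBruno}: the outer generating function has its first argument specialized to $f(x)$, not to $x$.

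Next I would raise this identity to the $m$-th power and apply the definition of a composita twice. From $[G(w,z)]^m=\sum_{k\geq m}G^{\Delta}(k,m,w)\,z^k$, substituting $w=f(x)$ and $z=F(x,z)$ gives
$$
[Y(x,z)]^m=\sum_{k\geq m}G^{\Delta}(k,m,f(x))\,[F(x,z)]^k.
$$
Expanding $[F(x,z)]^k=\sum_{n\geq k}F^{\Delta}(n,k,x)\,z^n$ and swapping the two (locally finite) sums yields
$$
[Y(x,z)]^m=\sum_{n\geq m}\Biggl(\sum_{k=m}^{n}F^{\Delta}(n,k,x)\,G^{\Delta}(k,m,f(x))\Biggr)z^n.
$$
Reading off the coefficient of $z^n$ gives exactly the asserted formula for $Y^{\Delta}(n,m,x)$.

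The only subtle step, and the one I expect to be the main obstacle, is the identification $Y(x,z)=G(f(x),F(x,z))$: one must be careful that the first slot of $G^{\Delta}$ is evaluated at $f(x)$ rather than $x$, which is the precise analogue of the Faà di Bruno rule where $g^{(k)}$ is evaluated at $y=f(x)$. Once this is correctly set up, the rest is purely formal power-series manipulation and the telescoping of two composita expansions; no convergence issue arises because for each fixed $n$ only finitely many terms contribute.
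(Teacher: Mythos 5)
Your proof is correct and follows essentially the same route as the paper: both hinge on the identification $Y(x,z)=G(f(x),F(x,z))$ and then invoke the composition rule for compositae, with the key subtlety being that the second composita is evaluated at $f(x)$ rather than $x$. Your write-up is in fact more self-contained, since you carry out the power-series substitution and coefficient extraction explicitly where the paper merely cites formula~(\ref{CompositionOGF}) and the composition theorem from \cite{KruSuperposition}.
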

\begin{proof} 
$$
[f(x+z)-f(x)]^m=\sum_{n\geq m} F^{\Delta}(n,m,x)z^n
$$
From formula (\ref{CompositionOGF}) we have
$$
Y^{\Delta}(n,m,x)=\sum_{k=m}^n F^{\Delta}(n,k,x)G^{\Delta}(k,m,f(x)). 
$$
Given the expression for the coefficients $Y(n,k,x)$ of the generating function $y(x+z)^k$, the expression for the coefficients of the composition of the generating functions $a(x+z)=y(f(x+z)$ has the form:
$$
A(n,m,x)=\left\{
\begin{array}{ll}
y(f(x))^m, & n=0\\
\sum_{k=1}^n F^{\Delta}(n,k,x)Y(k,m,x), & n>0.
\end{array}
\right.
$$
\end{proof}
Note that this theorem holds true for Bell polynomials as well \cite{Comtet}, because
$$
B_{n,m}(x)=\sum_{k=m}^n \frac{n!}{k!}F^{\Delta}(n,k,x)\frac{k!}{m!}G^{\Delta}(k,m,f(x))=\frac{n!}{m!}\sum_{k=m}^n F^{\Delta}(n,k,x)G^{\Delta}(k,m,f(x)). 
$$

\Example\label{example_1vln}
Let us find a composita of the function $f(x)=\frac{1}{x}$. The generating function for the composita is $F(x,z)=\frac{1}{x+z}-\frac{1}{x}=\frac{1}{x}\frac{-\frac{z}{x}}{1+\frac{z}{x}}$. Hence
$$
F^{\Delta}(n,k,x)={n-1\choose k-1}(-1)^{n}x^{-n-k}.
$$
Now let us write the composition $a(x)=\frac{1}{\ln(x)}$. The composita for the generating function $\ln(x+z)-\ln(x)$ is  
$\frac{k!}{n!}{\left[n \atop k\right]x^{-n}}$. From this it follows that the desired composita is equal to
$$
A^{\Delta}(n,m)=\sum_{k=m}^n \frac{k!}{n!}{\left[n \atop k\right]x^{-n}}{k-1\choose m-1}(-1)^{k}(\ln(x))^{-n-k},
$$
and the Bell polynomial is
$$
B_{n,k}=m!\sum_{k=m}^n k!{\left[n \atop k\right]x^{-n}}{k-1\choose m-1}(-1)^{k}(\ln(x))^{-n-k}.
$$
\Example Let us find a Bell polynomial for the function $a(x)=\frac{1}{1-x-x^2}$, the function $a(x)=g(f(x))$, where $g(x)=\frac{1}{1-x}$, $f(x)=x+x^2$. The composita of the function $f(x)$ is equal to $F^{\Delta}(n,k,x)={k \choose{n-k}}(2x+1)^{2k-n}$ (see example No. \ref{Example1}). The composita of the function $g(x)=\frac{1}{1-x}$ is equal to $F^{\Delta}(n,k,x)={n-1 \choose{k-1}}(1-x)^{-k-n}$. Using theorem \ref{Theorem_composition}, we obtain the desired Bell polynomial:
$$
B_{n,m}=\frac{n!}{m!}\sum_{k=m}^{n} {{k-1}\choose{m-1}}{{k}\choose{n-k}}(2x+1)^{2k-n}(1-x-x^2)^{-m-k}.
$$
$${{2\,x+1}\over{\left(-x^2-x+1\right)^2}}$$
$${{2}\over{\left(-x^2-x+1\right)^2}}+{{2\left(2\,x+1\right)^
 2}\over{\left(-x^2-x+1\right)^3}},~
{{\left(2\,x+1\right)^2}\over{\left(-x^2-x+1\right)^4}}$$
$${{12\,\left(2\,x+1\right)}\over{\left(-x^2-x+1\right)^3}}+
 {{6\left(2\,x+1\right)^3}\over{\left(-x^2-x+1\right)^4}},~
{{6\,\left(2\,x+1\right)}\over{\left(-x^2-x+1\right)^4}}+
 {{6\left(2\,x+1\right)^3}\over{\left(-x^2-x+1\right)^5}},~
{{\left(2\,x+1\right)^3}\over{\left(-x^2-x+1\right)^6}}$$

\Example Let us find a Bell polynomial for the function $\tan(x)$. For this purpose, we represent the generating function as
$$
A(x,z)=\tan(x+z)-\tan(x)=\frac{\tan(x)+\tan(z)}{1-\tan(x)\tan(z)}-\tan(x)=\frac{\tan(z)\sec(x)^2}{1-\tan(x)\tan(z)}.
$$
Hence $A(x,z)=f(x,\tan(z))$, where $f(x,z)=\frac{\sec(x)^2z}{1-\tan(x)z}$. Then the composita of $f(x,z)$ is equal to
$$
F^{\Delta}(n,k,x)={{n-1}\choose{k-1}}\tan(x)^{n-k}\sec(x)^{2k}.
$$
The composita of the generating function $\tan(z)$ is
$$
G^{\Delta}(n,k)=\frac{1+(-1)^{n-k}}{n!}\sum\limits_{j=k}^n 2^{n-j-1}\left\{{n \atop j}\right\}j!(-1)^{\frac{n+k}{2}+j}{j-1 \choose k-1}.
$$
Using the theorem of product of compositae \cite{KruCompositae}, we obtain the composita of the desired function:
$$
G^{\Delta}(n,m)=\sum_{k=m}^n G^{\Delta}(n,k)F^{\Delta}(k,m)=
$$
$$
=\sum_{k=m}^n \frac{1+(-1)^{n-k}}{n!}\sum\limits_{j=k}^n 2^{n-j-1}\left\{{n \atop j}\right\}j!(-1)^{\frac{n+k}{2}+j}{j-1 \choose k-1} {{{k-1}\choose{m-1}}\tan(x)^{k-m}\sec(x)^{2m}.}
$$
Hence the Bell polynomial is equal to
$$
B_{n,m}=\frac{\sec(x)^{2m}}{m!}\sum_{k=m}^n \frac{1+(-1)^{n-k}}{2}\sum\limits_{j=k}^n 2^{n-j}\left\{{n \atop j}\right\}j!(-1)^{\frac{n+k}{2}+j}{j-1 \choose k-1} {{{k-1}\choose{m-1}}\tan(x)^{k-m}.}
$$
$$\sec(x)^2$$
$$2\sec(x)^2\tan(x),~\sec(x)^4$$
$$6\sec(x)^2\tan(x)^2+2\sec(x)^2,~6\sec(x)^4\tan(x),\sec(x)^6]$$
$$24\sec(x)^2\tan(x)^3+16\sec(x)^2\tan(x),36\sec(x)^4\tan(x)^2+8\sec(x)^4,12\sec(x)^6\tan(x),\sec(x)^8$$

Given the composita of the function $\tan(x)$, we can obtain the composita of $\cot(x)$ by representing $\cot(x)=\frac{1}{\tan(x)}$ (see example \ref{example_1vln}).

\Example Let us derive a Bell polynomial for the function $\arctan(x)$. For this purpose, we write the generating function
$$
A(x,z)=\arctan(x+z)-\arctan(x)=\arctan\left(\frac{z}{1+x^2+xz}\right).
$$
Let us find a composita of the function $\frac{z}{1+x^2+xz}$. We represent it as
$$
f(x,z)=\frac{1}{(1+x^2)}\frac{z}{1+\frac{xz}{1+x^2}}.
$$
Hence, the composita of the function $f(x,z)$ is equal to
$$
F^{\Delta}(n,k)={n-1 \choose k-1}(-1)^{n-k}\frac{x^{n-k}}{(1+x^2)^n}.
$$
Given the composita of the generating function $\arctan(z)$ \cite{KruCompositae} 
$$
\frac{\left((-1)^{\frac{3n+k}{2}}+(-1)^\frac{n-k}{2}\right)k!}{2^{k+1}}\sum\limits_{j=k}^n \frac{2^j}{j!}{n-1 \choose j-1}\left[{j \atop k}\right],
$$
we obtain the composita of the desired generating function $A(x,z)$:
$$
A^{\Delta}(n,m)=\sum_{k=m}^n {n-1 \choose k-1}\frac{(-x)^{n-k}}{(1+x^2)^n}\frac{\left((-1)^{\frac{3k+m}{2}}+(-1)^\frac{k-m}{2}\right)m!}{2^{m+1}}\sum\limits_{j=m}^k \frac{2^j}{j!}{k-1 \choose j-1}\left[{j \atop m}\right].
$$
Hence the desired Bell polynomial is equal to
$$
B_{n,m}=n!\sum_{k=m}^n {n-1 \choose k-1}\frac{(-x)^{n-k}}{(1+x^2)^n}\frac{\left((-1)^{\frac{3k+m}{2}}+(-1)^\frac{k-m}{2}\right)}{2^{m+1}}\sum\limits_{j=m}^k \frac{2^j}{j!}{k-1 \choose j-1}\left[{j \atop m}\right].
$$
Presented below are the first terms of the Bell polynomial for the function $\arctan(x)$
$${{1}\over{x^2+1}}$$
$$-{{2\,x}\over{\left(x^2+1\right)^2}},~{{1}\over{\left(x^2+1\right)^2}}$$
$$6\,\left({{x^2}\over{\left(x^2+1\right)^3}}-{{1}\over{3\,\left(x^2+
 1\right)^3}}\right),~-{{6\,x}\over{\left(x^2+1\right)^3}},~{{1}\over{\left(x^2+1\right)^3}}$$
$$24\,\left({{x}\over{\left(x^2+1\right)^4}}-{{x^3}\over{\left(x^2+1
 \right)^4}}\right),~12\left({{3\,x^2}\over{\left(x^2+1\right)^4}}-{{2}\over{3\,\left(
 x^2+1\right)^4}}\right),~-{{12\,x}\over{\left(x^2+1\right)^4}},~{{1}\over{\left(x^2+1\right)^4}}$$

\Example Let us find a Bell polynomial for the function $a(x)=\frac{x}{\sqrt{1-x^2}}$. For this purpose, we represent this function in the form $g(h(g(f(x)))=\frac{1}{\sqrt{\frac{1}{x^2}-1}}$. Let us write the compositae for the functions $f(x)=x^2$ and $g(x)=\frac{1}{x}$ 
$$
F^{\Delta}(n,k,x)={k \choose n-k}(2x)^{2k-n}
$$
$$
G^{\Delta}(n,k,x)={n-1\choose k-1}(-1)^{n}x^{-n-k}.
$$
Hence the composita of $g(f(x)=\frac{1}{x^2}$ is equal to
$$
x^{-n-2k}\sum_{k=m}^n {k \choose n-k}2^{2k-n}{k-1\choose m-1}(-1)^{k}.
$$
Now let us find a composita of the function $\frac{1}{\sqrt{x}}$. For this purpose, we also use the composition of the functions $g(h(x))$.
Let us derive a composita for the function $\sqrt{x}$. For this purpose, we write the generating function $\sqrt{x+z}$ 
$$
H(x,z)=\sqrt{x+z}-\sqrt{x}=-\sqrt{x}2\frac{(1-\sqrt{1-4\frac{z}{4x}})}{2}.
$$
Note that the generating function in the brackets is the generating function for Catalan numbers \cite{KruCompositae}. Given the composita of the function, we obtain the composita for $H(x,z)$
$$
H^{\Delta}(n,k,x)=\frac{k}{n}{2n-k-1 \choose n-1}(-1)^{n-k}(\sqrt{x})^k2^k4^{-n}.
$$
Hence the composita of the function $\frac{1}{\sqrt{x}}$ is equal to
$$
(-1)^n(\sqrt{x})^m4^{-n}\sum_{k=m}^n \frac{k}{n}{2n-k-1 \choose n-1}2^k{k-1\choose m-1}.
$$
This result was obtained by L. Comtet \cite{Comtet}.
Now from theorem \ref{Theorem_composition}, we obtain the composita of the function $a(x)=\frac{x}{\sqrt{1-x^2}}$
$$
x^{m-n}\sum_{k=m}^{n}{{{\frac{(-1)^{k}}{k4^k}\sum_{j=m}^{k
 }{j2^{j}{{j-1}\choose{m-1}}{{2\,k-j-1}\choose{k-1}}}
 \sum_{i=k}^{n}{(-1)^{i}{{i-1}\choose{k-1}}{{i
 }\choose{n-i}}2^{2i-n}}(1-x^2)^{-{{m}\over{2
 }}-k}}}}$$

\Example Let us find a Bell polynomial for the generating function of Bernoulli numbers $a(x)=\frac{x}{e^x-1}$. For this purpose, we write the expressions for the coefficients of the generating functions $F(x,z)=(x+z)^k$ and $G(x,z)=\left(\frac{1}{x+z-1}\right)^k$. Hence
$$
F(n,k,x)={k\choose n}x^{k-n}.
$$
$$ 
G(n,k,x)={n+k-1\choose k-1}(x-1)^{-n-k}(-1)^n.
$$
Using formula (\ref{CompositionOGF}) for the composition of the generating functions $g(x+z)^m$ and $e^{x+z}$, we obtain expressions for the coefficients of $h(x+z)=[\frac{1}{e^{(x+z)}-1}]^m$
$$
H(n,m,x)=\left\{
\begin{array}{ll}
\frac{1}{(e^x-1)^m}, & n=0\\
\frac{1}{n!}{{\sum_{k=0}^{m}{\left(-1\right)^{k}\,k!\,{{m+k-1}\choose{m-1}}\,
 \left\{n \atop k\right\}\,\left(e^{x}-1\right)^{-m-k}\,e^{
 k\,x}}}}, & n>0.
\end{array}
 \right.
$$

From theorem \ref{Theorem_prod} we obtain the composita of the product $x\frac{1}{e^x-1}$
$$\sum_{j=0}^{m}{\left(-1\right)^{m-j}\,{{m}\choose{j}}\,\left({{x
 }\over{e^{x}-1}}\right)^{m-j}\,\sum_{i=0}^{n}{H(i,j,x)\,{{j}\choose{n-i}}\,x^{-n+j+i}}}.
$$
Then the Bell polynomial for the generating function of Bernoulli numbers has the form:
$$
B_{n,m}=\frac{n!}{m!}\sum_{j=0}^{m}{\left(-1\right)^{m-j}\,{{m}\choose{j}}\,\left({{x
 }\over{e^{x}-1}}\right)^{m-j}\,\sum_{i=0}^{n}{H(i,j,x)\,{{j}\choose{n-i}}\,x^{-n+j+i}}}.
$$

\section{Bell polynomials of inverse functions}

\begin{Theorem}\label{theorem_inversion} Let there be given a function $f(x)$ and its composita $F^{\Delta}(n,m,x)$. For the composita $Y^{\Delta}(n,m,x)$ of the inverse function $f^{-1}(x)=y(x)$, the following recurrent expressions hold true:
\begin{equation}\label{inversion1}
Y_1^{\Delta}(n,m,x)=\left\{
\begin{array}{ll}
\frac{1}{F^{\Delta}(m,m,y(x))}& n=m,\\
-\frac{1 }{F^{\Delta}(m,m,g(x))}\sum\limits_{k=m+1}^n Y^{\Delta}(n,k,x)F^{\Delta}(k,m,y(x))& n>0.
\end{array}
\right.
\end{equation}

\begin{equation}\label{inversion2}
Y_2^{\Delta}(n,m,f(x))=\left\{
\begin{array}{ll}
\frac{1}{F^{\Delta}(n,n,x)}& n=m,\\
-\frac{1}{F^{\Delta}(n,n,x)}\sum\limits_{k=m}^{n-1} F^{\Delta}(n,k,x)Y^{\Delta}(k,m,f(x))& n>0.
\end{array}
\right.
\end{equation}
\end{Theorem}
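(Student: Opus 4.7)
The plan is to derive both recurrences directly from Theorem \ref{Theorem_composition} by exploiting the two identities $f(y(x))=x$ and $y(f(x))=x$ that characterize the inverse. The key preliminary observation is that the generating function associated with the identity map is $(x+z)-x = z$, and since $[z]^m = z^m$ has only one nonzero coefficient, its composita is the Kronecker delta, $Z^{\Delta}(n,m) = [n=m]$.

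For (\ref{inversion1}), I would apply Theorem \ref{Theorem_composition} with outer function $f$ and inner function $y$ to the composition $f \circ y = \mathrm{id}$. The composita of $f(y(x+z))-f(y(x)) = z$ is then given by both sides of
\[
\sum_{k=m}^n Y^{\Delta}(n,k,x)\,F^{\Delta}(k,m,y(x)) = [n=m].
\]
Taking $n=m$ forces $Y^{\Delta}(m,m,x) = 1/F^{\Delta}(m,m,y(x))$, which is the base case. For $n>m$ the right side vanishes; isolating the $k=m$ summand and solving for $Y^{\Delta}(n,m,x)$ yields precisely the recurrence in (\ref{inversion1}).

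For (\ref{inversion2}), the strategy is symmetric: view $y \circ f = \mathrm{id}$ with outer function $y$ and inner function $f$, and apply Theorem \ref{Theorem_composition} again to obtain
\[
\sum_{k=m}^n F^{\Delta}(n,k,x)\,Y^{\Delta}(k,m,f(x)) = [n=m].
\]
This time the base case $n=m$ isolates $F^{\Delta}(n,n,x)\,Y^{\Delta}(n,n,f(x))=1$, and for $n>m$ I would isolate the $k=n$ summand (rather than $k=m$) and solve, which produces (\ref{inversion2}). The two recurrences are therefore just the two natural ways of extracting a single unknown from the same linear relation, differing only in whether one fixes $n$ and induces downward on $m$, or fixes $m$ and induces upward on $n$.

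The main obstacle, such as it is, lies in verifying that the pivots $F^{\Delta}(m,m,y(x)) = [f'(y(x))]^m$ and $F^{\Delta}(n,n,x) = [f'(x)]^n$ are nonzero, so that the recurrences are well-posed; this is automatic under the standing assumption that $f^{-1}$ exists as a differentiable function on the domain considered, equivalently $f'\neq 0$. Apart from this sanity check, the argument reduces entirely to Theorem \ref{Theorem_composition} combined with the trivial composita of $z$, and requires no further computation.
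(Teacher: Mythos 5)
Your proposal is correct and takes essentially the same route as the paper: both apply Theorem \ref{Theorem_composition} to the identities $f\circ y=y\circ f=\mathrm{id}$ to obtain $\sum_{k=m}^n Y^{\Delta}(n,k,x)F^{\Delta}(k,m,y(x))=\sum_{k=m}^{n}F^{\Delta}(n,k,x)Y^{\Delta}(k,m,f(x))=\delta(n,m)$ and then solve for the extreme ($k=m$, respectively $k=n$) term. You merely make explicit the ``simple transformations'' the paper leaves unstated, and add the sensible check that the pivots $F^{\Delta}(m,m,y(x))$ and $F^{\Delta}(n,n,x)$ are nonzero.
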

\begin{proof}
For self-inverse functions, the condition
$$
f(f^{-1}(x))=f^{-1}(f(x))=x
$$
is fulfilled. Hence from theorem \ref{Theorem_composition}, we can write
$$
\sum_{k=m}^n Y^{\Delta}(n,k,x)F^{\Delta}(k,m,y(x))=\sum_{k=m}^{n} F^{\Delta}(n,k,x)Y^{\Delta}(k,m,f(x))=\delta(n,m).
$$
Simple transformations give us formulae (\ref{inversion1},~\ref{inversion2}).
\end{proof}

Now from formula (\ref{BellFormula}) we can write the Bell polynomial of the inverse function
$$
B_{n,m}=\frac{n!}{m!}Y_1^{\Delta}(n,m,x)=\frac{n!}{m!}Y_2^{\Delta}(n,m,f(x)).
$$ 
\Example Let us consider a simple example. Let there be a function $f(x)=x^2$, its composita $F^{\Delta}(n,k,x)={k \choose n-k}(2x)^{2k-n}$, and inverse function $g(x)=\sqrt{x}$.  Let us find an expression for the Bell polynomial of the function $\sqrt{x}$, given the composita of the function $f(x)=x^2$. In view of expression (\ref{inversion1}), we obtain
$$
Z_1^{\Delta}(n,m,x)=\left\{
\begin{array}{ll}
\frac{1}{(2\sqrt{x})^m}, & m=n\\
-\frac{1}{2^m\sqrt{x}^m}\sum_{k=m+1}^{n}Z_1^{\Delta}(n,k,x){m \choose k-m}(2\sqrt{x})^{2m-k}, & n>m.
\end{array}
\right.
$$

In view of expression (\ref{inversion2}), we derive
$$
Z_2^{\Delta}(n,m,x)=\left\{
\begin{array}{ll}
\frac{1}{(2x)^n}, & m=n\\
-\frac{1}{2^nx^n}\sum_{k=m}^{n-1}{k \choose n-k}(2x)^{2k-n}Z_2^{\Delta}(k,m,x), & n>m.
\end{array}
\right.
$$
Hence the Bell polynomial for the function $\sqrt{x}$ is equal to
$$
B_{n,m}=\frac{n!}{m!}Z_1^{\Delta}(n,m,x)=\frac{n!}{m!}Z_2^{\Delta}(n,m,\sqrt{x}).
$$ 

\Example Let there be a function $f(x)=x\exp(x)$ and Lambert function $W(x)$. Let us find an expression for the n-derivative of the function $W(f(x))$. From theorem \ref{Theorem_prod} the composita of the function $f(x)$ is equal to
$$
F^{\Delta}(n,k,x)=e^{k\,x}\,\sum_{i=0}^{n}{{{k^{n-i}\,{{k}\choose{i}}\,x^{k-i}}\over{
 \left(n-i\right)!}}}
$$
and the Bell polynomial is equal to

$$
B_{n,k}=\frac{n!}{k!}e^{k\,x}\,\sum_{i=0}^{n}{{{k^{n-i}\,{{k}\choose{i}}\,x^{k-i}}\over{
 \left(n-i\right)!}}}.
$$

Hence from theorem \ref{CompositionOGF} and in view of the fact that these are self-inverse functions we obtain
$$
W^{(n)}(f(x))=\left\{
\begin{array}{ll}
\frac{1}{B_{1,1}}& n=1,\\
-\sum_{k=1}^{n-1} B_{n,k}W^{(k)}(f(x)), & n>1.
\end{array}
\right.
$$

Now we can write
$$
W^{(n)}=\left\{
\begin{array}{ll}
\frac{1}{1+x}e^{-x}& n=1,\\
-\frac{e^{-n\,x}}{\left(x+1\right)^{n}}{{n!\,\sum\limits_{m=1}^{n-1}e^{m\,x}\frac{W^{(m)}}{m!}{{{\sum\limits_{j=1}^{m}{\left(-1\right)^{j-m}\,{{m
 }\choose{j}}\,\sum\limits_{i=0}^{n}{{{j^{n-i}\,{{j}\choose{i}}\,x^{m-i}
 }\over{\left(n-i\right)!}}}}}}}}} & n>1
 \end{array}
\right. 
$$

Presented below are the first terms for the derivative

$$\frac{1}{1+x}e^{-x}$$  
$$\frac{-x-2}{(1+x)^3}e^{-2x}$$
$$\frac{(2\,x^2+8\,x+9)}{(1+x)^{5}e^{-3x}}$$
$$\frac{(-6\,x^3-36\,x^2-79\,x-64)}{(1+x)^7}e^{-4x}$$
$$\frac{24\,x^4+192\,x^3+622\,x^2+974\,x+625}{(1+x)^9}e^{-5x};$$
from whence we can obtain an expression for coefficients of the sequence A042977 \cite{OEIS}.

\section{Conclusion}

For derivation of the Bell polynomial of the second kind for the generating function $Y(x,z)=y(x+z)-y(x)$, it is necessary to use the composita of the generating function that can be obtained:\\
1) directly from the expression $Y(x,z)$ through transformations;\\
2) from theorem (\ref{FaDeeBruno}--\ref{theorem_inversion}).\\
Next, using formula (\ref{BellFormula}), the desired polynomial is derived. The numerous examples considered in the paper convincingly prove the efficiency of the proposed methods.

\end{document}